\newcommand{\1}{{1 \hspace{-0.35em} {\rm 1}}}
\newcommand{\I}{\mathcal{I}}
\newcommand{\lan}{\langle}
\newcommand{\ra}{\rangle}
\newcommand{\ve}{\varepsilon}
\newcommand{\T}{\mathcal{T}}
\newcommand{\F}{\mathcal{F}}
\newcommand{\Rep}{{\rm Rep}}
\newcommand{\g}{\mathfrak{g}}
\newcommand{\R}{\mathbb{R}}
\newcommand{\greq}{\stackrel{Gr}{=}}
\DeclareMathOperator{\FPdim}{FPdim}
\newcommand{\la}{{\lambda}}
\newcommand{\La}{{\Lambda}}
\newcommand{\al}{\alpha}
\DeclareMathOperator{\End}{End}
\DeclareMathOperator{\Hom}{Hom}
\newcommand{\C}{\mathbb C}
\newcommand{\CC}{\mathcal{C}}
\newcommand{\Z}{\mathbb Z}
\newcommand{\ot}{\otimes}
\newcommand{\be}{\mathbf{e}}
\numberwithin{equation}{section}
\newtheorem{theorem}[equation]{Theorem}
\newtheorem{lemma}[equation]{Lemma}
\newtheorem{prop}[equation]{Proposition}
\theoremstyle{definition}
\newtheorem{remark}[equation]{Remark}
\newtheorem{definition}[equation]{Definition}
\begin{document}

\title[Unitarizablity of premodular categories]
{Unitarizablity of premodular categories}

\author{Eric C.\ Rowell}
\address{Department of Mathematics,
    Texas A\&M University, College Station, TX 77843}
   \email{rowell@math.tamu.edu}
\date{\today}
\subjclass[2000]{Primary 17B37; Secondary 18D10, 20G42}

\begin{abstract}
We study the unitarizability of
 premodular categories constructed from representations of quantum group at roots of
unity.  We introduce \emph{Grothendieck unitarizability}
as a natural generalization of unitarizability to classes of premodular categories
with a common Grothendieck semiring.  We obtain new results for quantum groups of
Lie types $F_4$ and $G_2$, and improve the previously obtained results for Lie types $B$ and $C$.
\end{abstract}
\maketitle

\section{Introduction}
A unitary premodular category is a premodular category over $\C$ whose morphisms spaces have been
equipped with a positive-definite Hermitian form, compatible with the other structures (\emph{i.e.} braiding, duality etc.).  In particular for any
object $V$ the vector space $\End(V)$ become a Hilbert space, and the braiding morphisms act by unitary operators.  It is a difficult task, in general, to show that a given Hermitian premodular category is unitary (see \cite{Kir} and \cite{Wenzl1}).  However, one can study the weaker notion of \emph{pseudo-unitarity}.  A premodular category is pseudo-unitary if
the categorical dimension of any object is a positive number (although this is not the
most general definition, see Section \ref{background} below).  Pseudo-unitarity can often be checked directly, and since any unitary premodular category is automatically pseudo-unitary it is a useful statistic.
Furthermore, there do not appear to be any examples of pseudo-unitary Hermitian premodular categories that are not unitarizable.

Unitary premodular categories have important applications in several areas of mathematics
and physics.
Firstly, they are a rich source of unitary representations of the braid group.  Secondly, they can be used to construct $\rm{II}_1$ von Neumann factors (see \cite{Wenzl1}).  In case the categories are modular as well, one can construct 3-dimensional Topological Quantum Field Theories (TQFTs) (see \cite{Tur}).  They can also be regarded as algebraic models for exotic 2-dimensional physical (anyonic) systems \cite{RR} such as the quasi-particles in the Fractional Quantum Hall Effect.  Recently these last two applications have been combined to
form the basis for topological quantum computation \cite{FLKW}.  This has lead to renewed interest in constructing and studying unitary premodular categories.  Fortunately there
are several well-known constructions of premodular categories, one of which is the
subject of this article.

To any simple Lie algebra $\g$ and $q\in\C$ with $q^2$ a primitive
$\ell$th root of unity, one may associate a Hermitian premodular category $\CC(\g,q,\ell)$
over $\C$.
The construction is well-known (see for example \cite{BK}): one begins with the category
of finite-dimensional representations $\Rep(U_q\g)$ of Lusztig's integral form of the quantum
group $U_q\g$, then passes to the (ribbon) subcategory $\T$ of tilting modules (due to H. Andersen).  The quotient of $\T$ by the tensor ideal of negligible morphisms yielding the finite semisimple ribbon (=premodular) category $\CC(\g,q,\ell)$.  Kirillov Jr. \cite{Kir} defined an
Hermitian structure on $\CC(\g,q,\ell)$, and conjectured that for the choice $q=e^{\pi i/\ell}$ the form is positive definite provided $2\mid\ell$ if $\g$ is of Lie type $B, C$ or $F_4$ and $3\mid\ell$ if $\g$ is of Lie type $G_2$.  Subsequently, Wenzl \cite{Wenzl1} proved this conjecture, and we showed \cite{Rowell1} that the hypothesis $2\mid\ell$ is necessary for Lie types $B$ and $C$ for $\ell$ sufficiently large.  In fact in \cite{Rowell1} it is shown that unitarity fails in a much stronger sense: no Hermitian premodular category with the same Grothendieck semiring as $\CC(\mathfrak{so}_{2k+1},q,\ell)$ can even be pseudo-unitary when $\ell$ is odd with $4k+3\leq\ell $.  This result was obtained by appealing to classification results found in \cite{TbWz}.

In this paper we investigate several notions of unitarity for categories of the form $\CC(\g,q,\ell)$ where $\g$ is of Lie type $B, C, F_4$ and $\ell$ is odd or $\g$ is
of Lie type $G_2$ and $3\nmid\ell$.  The basic result is these cases is that if $\ell$
is large enough none of the categories $\CC(\g,q,\ell)$ is pseudo-unitary for any choice
of $q$.  We also show that for some small values of $\ell$, there \emph{are}
choices of $q$ so that $\CC(\g,q,\ell)$ is pseudo-unitary.  In addition we show that in
some cases (with $\ell$ small) there are unitary premodular categories with the same Grothendieck semiring as $\CC(\g,q,\ell)$.
However, these anomalies can be explained as ``low-rank coincidences."  This completes the story for these types of categories, complementing the results
in \cite{Wenzl1} and \cite{Rowell1}, sharpening the results of the latter.

Here is a more detailed description of the contents of this paper.  In Section \ref{background}
we briefly describe the properties of premodular categories that are germane to our problem, introduce the new notion of Grothendieck unitarizability.  and describe
the premodular categories coming from quantum groups at roots of unity
In Section \ref{results} we prove some general
results for these categories and then prove our main results for Lie types $G_2$, $F_4$, $B$ and $C$.  In the Appendix we give an explicit equivalence between two quantum group categories
of Lie types $F_4$ and $E_8$ respectively.
\subsection*{Acknowledgments}
The author would like to thank an anonymous referee for an especially careful reading of
the manuscript and for providing a simple proof extending the known bound of $4k+3\leq\ell$ to
the sharp bound of 
$2k+5\leq\ell$ in Theorem \ref{bcthm}, originally stated as a conjecture.
\section{Background}\label{background}

Here we establish the setting in which we choose to work.  For the most part
we will be interested in Hermitian premodular categories.
By a \textit{premodular category} $\CC$ we mean an abelian semisimple $\C$-linear ribbon
category with finitely many isomorphism classes of simple objects (see \cite{Tur} for
a more detailed description). In other settings a premodular
category is called a \textit{ribbon fusion category}. A \textit{Hermitian} premodular category is equipped with an involution $f\mapsto\overline{f}$ on morphisms so that
the pairing $(f,g)\rightarrow tr_\CC(f\overline{g})$ is Hermitian and non-degenerate.
One also requires the involution to restrict to ordinary complex conjugation on the base
field $\C$.

\subsection{Grothendieck Equivalence}
For any semisimple braided tensor category $\CC$ the tensor product decomposition rules are
described by the \textbf{Grothendieck ring} $Gr(\CC)$: unital based ring (see \cite{Ostrik1})  with basis consisting of the isomorphism classes of simple objects $[X]$ with
$[X]\cdot[Y]=[X\ot Y]$, and $[X]+[Y]=[X\oplus Y]$, with unit $[\1]$ where $\1$ is the unit object.  We will sometimes abuse notation and write $X$ instead of $[X]$ for the elements of $Gr(\CC)$
as this will cause no confusion.  If there are finitely many simple classes
$[X_i]$, $0\leq i\leq n-1$ we say $\CC$ has \emph{rank} $n$, and $Gr(\CC)$ can be identified
with the finite rank unital based ring $\Z[x_0,x_1,\ldots,x_{n-1}]/\I$ where the ideal $\I$ is generated by
$\{x_ix_j-\sum N_{i,j}^kx_k\}$ given that $X_i\ot X_j\cong\sum_k N_{i,j}^kX_k$.  The structure constants $N_{i,j}^k$ satisfy a number of useful symmetries.  If we denote
by $i^*$ the index corresponding to the object $X_i^*$ dual to $X_i$ the we have:
\begin{eqnarray}
N_{i,j}^k=N_{j,i}^k=N_{i,k^*}^{j^*}=N_{i^*,j^*}^{k^*} \quad \text{and}\quad N_{i,j}^0=\delta_{j,j^*}
\end{eqnarray}
The Grothendieck ring has a useful matrix representation given by $X_i\rightarrow N_i$ where
$N_i$ is the $n\times n$ matrix with $(k,j)$ entry equal to $N_{i,j}^k$.  By the above symmetries one sees that $N_{i^*}=N_i^T$ where ${}^T$ is the transpose operator.  If $i=i^*$
for all $i$ (\emph{i.e.} $\CC$ is a \emph{self-dual} category), then the $N_{i,j}^k$ are totally symmetric in their indices.

The Grothendieck ring plays an important role in the theory of braided tensor categories due to a result that
is known as \emph{generalized Ocneanu rigidity} (see \cite{ENO}[Section 2.7]):
\begin{prop}\label{ocneanu}
For a fixed unital based ring $A$, there are at most finitely many inequivalent
premodular categories $\CC$ with $Gr(\CC)\cong A$.
\end{prop}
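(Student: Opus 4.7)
The plan is to deduce this from the general Ocneanu rigidity theorem for fusion categories, treating the extra braided and ribbon data as a mild enhancement. The essential input is that the deformation theory of a semisimple monoidal category is trivial, so the moduli space of ring-preserving premodular structures on a fixed based ring $A$ is a scheme of finite type over $\C$ whose tangent spaces all vanish, hence is a finite disjoint union of reduced points.

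The concrete steps I would carry out are as follows. First, fix the based ring $A$ with basis $\{x_i\}$ and structure constants $N_{i,j}^k$, and parametrize all premodular structures realizing $A$ as a Grothendieck ring. Such a structure is determined by a finite collection of complex numbers: the $F$-matrices (associators in chosen bases of the $\Hom$-spaces $\Hom(X_i\otimes X_j, X_k)$), the $R$-matrices (braiding coefficients), and the ribbon twists $\theta_i$. Writing down pentagon, hexagon, and ribbon axioms gives a finite system of polynomial equations in these unknowns, so the set of premodular structures forms an affine scheme $\mathcal{M}(A)$ of finite type over $\C$. Two structures define equivalent premodular categories precisely when they differ by the algebraic gauge action corresponding to rescaling the chosen bases of the $\Hom$-spaces and applying braided monoidal natural isomorphisms, which is the action of a linear algebraic group $G$. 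Thus the isomorphism classes are in bijection with the $\C$-points of the quotient $\mathcal{M}(A)/G$, again of finite type.

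The core step is to show $\mathcal{M}(A)/G$ is zero-dimensional, i.e.\ that every point has vanishing tangent space. Infinitesimal deformations of the associator preserving the fusion rules are classified by a degree-3 Davydov--Yetter type cohomology group of the underlying monoidal category, and the main theorem of \cite{ENO}[Sec.~2.7 and references therein] asserts this group vanishes for any semisimple monoidal category over $\C$; this uses separability of the relevant algebras, which is automatic in characteristic zero. Having fixed the associator, infinitesimal deformations of the braiding and ribbon structure compatible with the hexagon and ribbon identities are controlled by analogous cohomology groups that also vanish (equivalently, the hexagon and ribbon equations, viewed as polynomial equations in the $R$-matrix entries and twists with the $F$-matrices fixed, cut out an étale subscheme). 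Together these imply that every $\C$-point of $\mathcal{M}(A)/G$ is isolated and reduced, so the quotient is finite.

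The main obstacle is the cohomological vanishing used in the third paragraph: this is precisely Ocneanu rigidity, and although it is technically involved, we may take it as the input from \cite{ENO}. Beyond that input, the remaining work is routine: verifying that the moduli problem is indeed cut out by polynomial equations in finitely many variables, and that the gauge action is algebraic. No subtlety arises from the braiding or ribbon structure beyond what already appears in the monoidal case, because once the associator is rigid, the braided/ribbon structures compatible with it form an étale cover that contributes only finitely many additional choices per monoidal structure.
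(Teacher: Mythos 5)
Your proposal is correct and follows essentially the same route as the paper: both reduce the statement to the Ocneanu rigidity result of \cite{ENO}[Section 2.7]. The only genuine divergence is in the passage from braided fusion categories to premodular (ribbon) ones. You assert a further cohomological vanishing controlling infinitesimal deformations of the braiding and twist, but give no reference for it; the paper instead cites \cite{ENO} for finiteness of braided fusion categories realizing $A$ and then closes by observing that a braided fusion category admits at most finitely many ribbon structures (\cite{Kas}[Lemma XIV.3.4]). The latter is the more direct and standard finish---the set of twists, if nonempty, is a torsor over the finite group of monoidal natural automorphisms of the identity functor---and is what your remark about the ribbon data contributing ``only finitely many additional choices per monoidal structure'' is really getting at, without needing the unsupported cohomology claim.
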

We should point out that this theorem is stated in \cite{ENO} for braided fusion categories,
but any such category has at most finitely many ribbon structures (see, for example, \cite{Kas}[Lemma XIV.3.4]).

Motivated by this theorem we make the following definition:
\begin{definition}
Let $\CC$ and $\F$ be two premodular categories.  We say $\CC$ and $\F$ are
\textbf{Grothendieck equivalent}, and write $\CC\greq\F$ if $Gr(\CC)\cong Gr(\F)$ as
unital based rings.
\end{definition}
An isomorphism between unital based rings is a bijection between the bases compatible with the fusion rules, so to show that two
categories are Grothendieck equivalent it is enough to show that there is a bijection
between the given bases that preserves the structure constants $N_{ij}^k$.

We have the following generalized notion of dimension:
\begin{definition}
A \textbf{dimension function} for a braided fusion category $\CC$ is a ring homomorphism
$\phi:Gr(\CC)\rightarrow \C$.
\end{definition}
In any ribbon category $\CC$ there is a canonical dimension function $\dim_\CC$ defined
for any object $X$ as the categorical trace of the identity morphism $Id_X\in\End(X)$.
It is well-known that $\dim_\CC(X)=\dim_\CC(X^*)$ for any object $X$, and that $\dim_\CC(X)\in\R$.  Notice that if $\phi$ is a dimension function for $\CC$ then the vector $\mathbf{f}=\sum_i\phi(X_i)\be_i$ is an eigenvector for each matrix $N_i$ with eigenvalue $\phi(X_{i^*})$ (where $\be_i$ is a standard basis vector for $\R^n$).
This follows from the fact that $N_{i}\be_j=\sum_k N_{i,j}^k\be_k$ and $\phi(X_{i^*})\phi(X_k)=\sum_j N_{i^*,k}^j\phi(X_j)$ and the calculation:
$$N_i\mathbf{f}=\sum_j\phi(X_j)\sum_kN_{i,j}^k\be_k=\sum_k(\sum_j N_{i^*,k}^j\phi(X_j))\be_k=\sum_k\phi(X_{i^*})\phi(X_k)\be_k=\phi(X_{i^*})\mathbf{f}.$$

\begin{definition}
If $\CC$ is a ribbon category, the \textbf{global dimension} $\dim(\CC)$ of the category
 is the sum of the squares of the categorical dimensions of the simple objects.
\end{definition}
 In more
general settings the global dimension is defined differently (see \cite{ENO}), but for ribbon (or more generally, spherical) categories
the definition we give is equivalent to the standard one.

\begin{definition}
The \textbf{Frobenius-Perron dimension} of an object $X$ in $\CC$ is the largest real eigenvalue
of the matrix $N_X$ of multiplication by $X$ in the the Grothendieck ring $Gr(\CC)$, and
is denoted $\FPdim(X)$.  $\FPdim(\CC)$ is defined to be the sum of the squares of $\FPdim(X_i)$ for simple classes $X_i$.  A category is called \textbf{pseudo-unitary}
if $\dim(\CC)=\FPdim(\CC)$.
\end{definition}

From \cite{ENO}
we extract some important properties of $\FPdim$:
\begin{enumerate}
\item $\FPdim(X)>0$ for any object (in particular $\FPdim(X)\in\R$).
\item $X\rightarrow\FPdim(X)$ defines a dimension function.
\item $\FPdim(X)$ is the unique dimension function with $\FPdim(X)>0$ for all objects $X$.
\end{enumerate}
It also follows from the argument above that $\FPdim(X)=\FPdim(X^*)$, since the largest
eigenvalues of $M$ and $M^T$ are the same.

Pseudo-unitarity does not immediately imply that $\dim(X_i)=\FPdim(X_i)$ for each simple object.
It is true (see \cite{ENO}[Prop. 8.3]) that if $\CC$ is any pseudo-unitary fusion category
then there is a unique \textit{spherical} structure on $\CC$ so that $\dim$ and $\FPdim$ coincide, but if $\CC$ is a pseudo-unitary ribbon category there may not be a \emph{ribbon} structure on $\CC$
satisfying $\dim(X_i)=\FPdim(X_i)$ for all simple $X_i$.  In general we have the following:
\begin{lemma}\label{dimlemma}
Suppose $\CC$ is a pseudo-unitary premodular category.  Then
\begin{enumerate}
\item[(a)] $\dim(X_i)=\pm\FPdim(X_i)$ and
\item[(b)] if $X_i$ is a subobject of $X_j\ot X_j^*$ for some simple $X_j$ then $\dim(X_i)=\FPdim(X_i)$.
\end{enumerate}
\end{lemma}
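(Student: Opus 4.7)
My plan is to deduce (a) from the Perron--Frobenius theorem applied to the fusion matrices $N_i$, and then to bootstrap to (b) by a positivity argument using (a) together with the observation that $X_j\otimes X_j^*$ decomposes into ``positive'' pieces.

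For (a) I will first observe that the categorical dimension $\dim_\CC$ is itself a dimension function on $Gr(\CC)$, so the eigenvector calculation displayed just before the statement applies directly: the vector $\mathbf{d}=\sum_i\dim(X_i)\be_i$ is an eigenvector of each non-negative integer matrix $N_i$ with eigenvalue $\dim(X_{i^*})=\dim(X_i)$. Since $\FPdim(X_i)$ is the Perron--Frobenius eigenvalue of $N_i$, it dominates the absolute values of all other eigenvalues of this non-negative matrix, so $|\dim(X_i)|\leq\FPdim(X_i)$. As $\dim(X_i)\in\R$, this gives the pointwise inequality $\dim(X_i)^2\leq\FPdim(X_i)^2$ for every $i$. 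Pseudo-unitarity is precisely the assertion $\sum_i\dim(X_i)^2=\sum_i\FPdim(X_i)^2$, which forces each termwise inequality to be an equality, yielding $\dim(X_i)=\pm\FPdim(X_i)$.

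For (b) I will introduce signs $\sigma_i\in\{\pm1\}$ with $\dim(X_i)=\sigma_i\FPdim(X_i)$, which exist by (a). Applying the two ring homomorphisms $\dim_\CC$ and $\FPdim$ to the decomposition $[X_j\otimes X_j^*]=\sum_k N_{j,j^*}^k[X_k]$, and using $\dim(X_j^*)=\dim(X_j)$, $\FPdim(X_j^*)=\FPdim(X_j)$, and $\dim(X_j)^2=\FPdim(X_j)^2$ from (a), I can subtract the two resulting identities to get
\[
\sum_k N_{j,j^*}^k(1-\sigma_k)\FPdim(X_k)=0.
\]
Every summand is non-negative (the factors satisfy $N_{j,j^*}^k\geq 0$, $1-\sigma_k\in\{0,2\}$, and $\FPdim(X_k)>0$), so each summand must vanish. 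In particular, whenever $N_{j,j^*}^i\geq 1$, i.e.\ when $X_i$ is a subobject of $X_j\otimes X_j^*$, we must have $\sigma_i=1$, which is exactly $\dim(X_i)=\FPdim(X_i)$.

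The argument is quite direct and I do not anticipate a serious obstacle; the only mildly subtle point is the Perron--Frobenius bound $|\dim(X_i)|\leq\FPdim(X_i)$, which is standard for non-negative matrices once one recognises $\dim_\CC$ as a dimension function. The overall shape is typical of pseudo-unitarity arguments: extract a sign from a Perron--Frobenius comparison, then pin that sign down to $+1$ on any simple that appears inside a manifestly ``positive'' object such as $X_j\otimes X_j^*$.
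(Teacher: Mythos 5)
Your proof is correct. Part (a) is essentially identical to the paper's argument: both recognize $\dim_\CC$ as a dimension function, invoke Perron--Frobenius to get $|\dim(X_i)|\leq\FPdim(X_i)$ for each $i$, and then use the equality of sums of squares from pseudo-unitarity to force termwise equality.

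For part (b), you take a genuinely different (though closely related) route. The paper phrases the argument structurally: it defines $\CC_0=\{X_i:\dim(X_i)=\FPdim(X_i)\}$ and $\CC_1=\{X_i:\dim(X_i)=-\FPdim(X_i)\}$, asserts these give a well-defined $\Z_2$-grading of $Gr(\CC)$ because both $\dim$ and $\FPdim$ are ring characters, and then observes that $\sigma_j=\sigma_{j^*}$ (as $\dim(X_j)=\dim(X_j^*)$) forces all simple constituents of $X_j\ot X_j^*$ into $\CC_0$. Your argument skips the grading formalism and instead applies both characters directly to the decomposition of $X_j\ot X_j^*$, subtracts, and uses the strict positivity of $\FPdim$ to conclude each term vanishes. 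In effect you have unpacked the positivity argument that makes the paper's $\Z_2$-grading well-defined (the paper leaves this as ``it is clear'') and specialized it to $a=j$, $b=j^*$. Your version is more self-contained and explicit; the paper's version records the $\Z_2$-grading as a structural consequence, which gives more for free --- e.g.\ that a subobject of $X_a\ot X_b$ with $\sigma_a\sigma_b=1$ is in $\CC_0$, not just for $b=a^*$ --- at the cost of an unverified step. Both are valid.
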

\begin{proof}
Part (a) follows from
$$\dim(\CC)=\sum_i(\dim(X_i))^2=\sum_i(\FPdim(X_i))^2=\FPdim(\CC)$$
and
 $|\dim(X_i)|\leq \FPdim(X_i)$ (by the Perron-Frobenius theorem).  If $\dim(X_i)<0$ for some
simple $X_i$ then $\CC_0:=\{X_i:\dim(X_i)=\FPdim(X_i)\}$ and $\CC_1:=\{X_i:\dim(X_i)=-\FPdim(X_i)\}$ are (non-empty) bases for a $\Z_2$-grading on $Gr(\CC)$.
Since both $X_i\rightarrow\dim(X_i)$ and $X_i\rightarrow\FPdim(X_i)$ give rise to characters
of $Gr(\CC)$ it is clear that this grading is well-defined.  Any simple subobject of $X_i\ot X_i^*$ is in $\CC_0$ since $\dim(X_i)=\dim(X_i^*)$, giving (b).
\end{proof}

Finally, we give the following generalization of unitarizability:
\begin{definition}
Let $\CC$ be any premodular category, and $[\CC]$ its Grothendieck equivalence class.
We say $\CC$ is \textbf{Grothendieck unitarizable} if there is a unitary premodular category $\F$ with $\F\in[\CC]$.
\end{definition}
\begin{remark}
The original form of Ocneanu rigidity states that there are finitely many \textit{fusion} categories sharing a common Grothendieck semiring.  So all of the notions described here work equally well for fusion categories.  However we are interested in fusion categories that are at least braided so we have made our definitions in this more limited setting.
\end{remark}

\subsection{Quantum groups at roots of unity}

In order to define and work with the categories $\CC(\g,q,\ell)$, we will need some
(standard) notation and definitions.  We make no attempt to be entirely self-contained, referring the
reader to the survey paper \cite{Survey} and the texts \cite{BK} and \cite{Tur} for full details.

Let $\g$ be a simple Lie algebra, $\Phi$ its root system
(resp. coroot system), with root basis $\Pi$  and
positive roots $\Phi_+$.  We denote by $\check{\Phi},\check{\Pi}$ and $\check{\Phi}_+$ the
corresponding sets of coroots.
Let $\lan\; ,\,\ra$ be the symmetric bilinear form on $\R\Phi$ normalized so that
$\lan\al,\al\ra=2$ for \emph{short} roots.  We embed $\check{\Phi}$ and $\Phi$ in the same vector space by means of the identification $\check{\al}=\frac{2\al}{\lan\al,\al\ra}$.  Observe that if $\al$ is a short root, $\check{\al}=\al$ under this identification.  Denote by $\theta$ the highest root and by
$\theta_s$ the highest short root.  Define $m=\frac{\lan\beta,\beta\ra}{\lan\al,\al\ra}$ where
$\beta$ is a long root and $\al$ is a short root, so that $m=1$ for Lie types $A,D$ and $E_N$ ($N=6,7$ or $8$),
$m=2$ for Lie types $B,C$ and $F_4$ and $m=3$ for Lie type $G_2$.  Denote by $\rho$ the
half-sum of the positive roots, $P$ the set of weights, and $P_+$ the set of dominant weights
labeling the irreducible highest-weight representations of $\g$.

The \textbf{Weyl group} of $\g$ is the (finite) group $W$ generated by the $|\Pi|$ \textit{fundamental reflections} defined on $\R\Phi$ for each $\al_i\in\Pi$ by
$s_i(\la)=\la-\lan\la,\check{\al}_i\ra\al_i$.  The \textbf{affine Weyl group} $W_\ell$ is generated by the fundamental reflections $s_i$ and an additional translation operator $T_\ell$.  If $m\mid\ell$, then $T_\ell$ is the translation by $\ell\check{\theta}$ while if
$m\nmid\ell$ $T_\ell$ is the translation by $\ell\check{\theta_s}=\ell\theta_s$.  Often we
will need to use the ``dot" action of the (affine) Weyl group defined by $w.\la=w(\la+\rho)-\rho$.  There is a well-defined multiplicative sign function $\ve$ on $W_\ell$ given by $\ve(s_i)=-1$, $\ve(T_\ell)=1$ and extended to products.

With these definitions we can describe the Hermitian ribbon category $\CC(\g,q,\ell)$.  The simple objects $X_\la$
are labeled by the intersection $C_\ell$ of the set of dominant weights $P_+$ with the fundamental domain of the dot action of $W_\ell$ containing $0$.  Explicitly, we have: $$C_\ell:=\begin{cases}\{\la\in P_+: \lan\la+\rho,\theta\ra<\ell\} & \text{if}\quad m\mid\ell\\
\{\la\in P_+: \lan\la+\rho,\theta_s\ra<\ell\} & \text{if}\quad m\nmid\ell\end{cases}$$

The tensor
product decomposition rules for $\CC(\g,q,\ell)$ are $W_\ell$-antisymetrizations of
the tensor product decomposition rules for $\Rep(U_q\g)$ with $q$ not a root of unity or, equivalently, of $\Rep(\g)$.  Define
$m_{\la\mu}^\nu=\dim\Hom_\g(V_\nu,V_\la\ot V_\mu)$, \emph{i.e.} the multiplicity of the irreducible Weyl
module $V_\nu$ (over $\g$) in the decomposition of $V_\la\ot V_\mu$ into irreducible Weyl modules.  Similarly, denote by
$N_{\la\mu}^\nu$ the multiplicity of the object $X_\nu$ in $X_\la\ot X_\mu$.  Then we have the
formula (see \cite{AndPar} and \cite{Sawin}):

\begin{equation}\label{qRacah}
N_{\la\mu}^\nu=\sum_{\stackrel{w\in W_\ell}{w.\nu\in P_+}} \ve(w)m_{\la\mu}^{w.\nu}
\end{equation}

Having chosen some ordering on the set $C_\ell$ the matrices $N_\la$ give a faithful representation of $Gr(\CC(\g,q,\ell))$ by
extending $X_\la\rightarrow N_\la$ to all objects.

\begin{remark}
Observe that the structure constants $N_{\la\mu}^\nu$ depend only on $\ell$, not on the particular choice of $q$ with $q^2$ a
primitive $\ell$th root of unity.  This implies that the class of categories $\mathfrak{C}(\g,\ell):=\{\CC(\g,q,\ell): q=e^{z\pi i/\ell}, \gcd(z,\ell)\}$
are Grothendieck equivalent.  It follows from the strong form of Ocneanu rigidity (Proposition \ref{ocneanu}) that there are finitely
many inequivalent premodular categories in $\mathfrak{C}(\g,\ell)$.
\end{remark}

The dimension function for $\CC(\g,q,\ell)$ is computed (for simple objects) by the formula:
\begin{equation}\label{qdim}
\dim_q(X_\la)=\prod_{\al\in\Phi_+}\frac{[\lan\la+\rho,\al\ra]}{[\lan\rho,\al\ra]}
 \end{equation}
where $[n]=\frac{q^n-q^{-n}}{q-q^{-1}}$ is the
usual $q$-number.  The categorical dimension is obtained by linear
extension of this formula to direct sums of simple objects.

\begin{prop}[Wenzl]  Suppose $m\mid\ell$, and set $q=e^{\pi i/\ell}$.  Then
$\dim_q$ is the Frobenius-Perron dimension for
for all of the Grothendieck equivalent categories in $\mathfrak{C}(\g,\ell)$.
\end{prop}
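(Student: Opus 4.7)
The plan is to invoke property (3) of the Frobenius--Perron dimension listed after the definition: $\FPdim$ is the \emph{unique} dimension function on $Gr(\CC)$ that is strictly positive on every simple object. By the remark preceding the proposition, all categories in $\mathfrak{C}(\g,\ell)$ share a common Grothendieck ring, so $\FPdim$ is determined by that ring alone. It therefore suffices to produce, for the single choice $q=e^{\pi i/\ell}$, a dimension function on $Gr(\CC(\g,q,\ell))$ which is strictly positive on every simple class $X_\la$, $\la\in C_\ell$. The natural candidate is $\dim_q$ itself, and the task reduces to verifying positivity via formula \eqref{qdim}.

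With the chosen $q$ one has $[n]=\sin(n\pi/\ell)/\sin(\pi/\ell)$, so \eqref{qdim} becomes
\[
\dim_q(X_\la)=\prod_{\al\in\Phi_+}\frac{\sin(\pi\lan\la+\rho,\al\ra/\ell)}{\sin(\pi\lan\rho,\al\ra/\ell)}.
\]
It is enough to show that, for every positive root $\al$, both arguments $\pi\lan\la+\rho,\al\ra/\ell$ and $\pi\lan\rho,\al\ra/\ell$ lie in the open interval $(0,\pi)$, equivalently, that
\[
0<\lan\la+\rho,\al\ra<\ell\quad\text{and}\quad 0<\lan\rho,\al\ra<\ell.
\]
The lower bounds are immediate: since $\la\in P_+$, $\la+\rho$ is strictly dominant (regular), so $\lan\la+\rho,\al\ra>0$ for every $\al\in\Phi_+$, and the special case $\la=0$ gives $\lan\rho,\al\ra>0$.

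For the upper bounds I exploit that $\theta$ is the \emph{highest} root in the standard partial order on $\Phi_+$: for any $\al\in\Phi_+$, the difference $\theta-\al$ is a non-negative integer combination of simple roots, and regular dominance of $\la+\rho$ then yields $\lan\la+\rho,\theta-\al\ra\geq 0$, i.e.\ $\lan\la+\rho,\al\ra\leq\lan\la+\rho,\theta\ra$. Under the hypothesis $m\mid\ell$, the very definition of $C_\ell$ reads $\lan\la+\rho,\theta\ra<\ell$, so the required upper bound follows; the $\rho$-bound is the same inequality with $\la=0$. Hence every factor in the displayed product is positive, $\dim_q(X_\la)>0$ for all $\la\in C_\ell$, and we conclude $\dim_q=\FPdim$ on all of $\mathfrak{C}(\g,\ell)$.

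The only subtlety worth flagging is where the hypothesis $m\mid\ell$ enters: it is precisely what allows $C_\ell$ to be described by the alcove inequality built from $\theta$ rather than $\theta_s$, which in turn is what makes the partial-order argument $\lan\la+\rho,\al\ra\leq\lan\la+\rho,\theta\ra$ give the uniform bound. In the opposite regime ($m\nmid\ell$) the analogous bound would have to involve $\theta_s$, and indeed positivity of $\dim_q$ is known to fail in many such cases by the results of \cite{Wenzl1} and \cite{Rowell1} discussed in the introduction, which is exactly why the present paper treats those cases separately.
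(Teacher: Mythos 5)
Your proof is correct, and it is essentially the argument implicit in the paper's attribution of this statement to Wenzl. The paper does not supply its own proof of this Proposition (it is stated as a citation), but your positivity verification is exactly parallel to the one the paper carries out at the start of its proof of Theorem~\ref{fpthm}, the $m\nmid\ell$ analogue, where the author writes the chain $0<\lan\la+\rho,\check{\al}\ra\leq\lan\la+\rho,\check{\theta}_s\ra<\ell$ without elaboration; you have actually filled in the missing dominance-order justification ($\theta-\al\in\Z_{\geq 0}\Pi$ together with $\lan\la+\rho,\al_i\ra>0$). The one shortcut you correctly exploit, and the reason your proof is shorter than that of Theorem~\ref{fpthm}, is that when $m\mid\ell$ the candidate $\dim_q$ is already the canonical categorical dimension of a ribbon category and hence is automatically a dimension function on the Grothendieck ring, so only positivity needs checking; in the $m\nmid\ell$ regime the modified expression $d_\la$ is \emph{not} the categorical dimension of any category in $\mathfrak{C}(\g,\ell)$, which is precisely why the paper must separately establish multiplicativity via the Weyl character identity and $W_\ell$-antisymmetry. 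Your closing remark about where $m\mid\ell$ enters --- namely that it makes $C_\ell$ an alcove cut out by $\theta$, the maximum of $\Phi_+$ in the dominance order, rather than by $\theta_s$, which is not --- correctly identifies the crux.
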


\section{Main results}\label{results}
We can now give the main results of the paper.  We first prove some
general results and then apply them to categories of Lie types $G_2$, $F_4$, $B$ and
$C$.  In what follows the categories of Lie types $G_2$ and $F_4$ are always modular, while
the categories of type $B$ and $C$ may be only premodular.

\subsection{General results}

In \cite{Rowell1} the $\FPdim$ function was determined for categories $\CC(\mathfrak{so}_{2k+1},q,\ell)$, with $\ell$ odd.
  The proof found there contained some Lie-type-$B$-specific \emph{ad-hoc} arguments that do not extend to
other Lie types.  The following generalizes that result:
\begin{theorem}\label{fpthm}
Assume that $m\nmid\ell$ or $m=1$, and define
\begin{equation}\label{fpdim}
d_\la(q):=\prod_{\check{\al}\in\check{\Phi}_+}\frac{[\lan\la+\rho,\check{\al}\ra]}{[\lan\rho,\check{\al}\ra]}
\end{equation}
where $q$ is a formal variable.  Then
$$d_\la(e^{\pi i/\ell})=\FPdim(X_\la)$$ is the Frobenius-Perron dimension
for all of the Grothendieck equivalent categories in $\mathfrak{C}(\g,\ell)$.
\end{theorem}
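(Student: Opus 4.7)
The plan is to show that $X_\la\mapsto d_\la(e^{\pi i/\ell})$ is a positive-valued dimension function on $Gr(\CC(\g,q,\ell))$; by the uniqueness characterization of $\FPdim$ recalled above, it must then coincide with $\FPdim$. The key observation I would prove first is the identification
$$d_\la(q) \;=\; \chi_\la\bigl(\exp(2t\check\rho)\bigr),\qquad q=e^t,$$
where $\chi_\la$ is the classical character of the irreducible $\g$-module $V_\la$ and $\check\rho=\tfrac12\sum_{\check\al\in\check\Phi_+}\check\al=\sum_i\check\omega_i$. To obtain it I would apply the Weyl character formula at the Cartan element $2t\check\rho$: after rewriting $\langle w(\la+\rho),2t\check\rho\rangle=\langle w\check\rho,2t(\la+\rho)\rangle$, the numerator becomes a Weyl sum governed by the dual root system $\check\Phi$ (whose Weyl group is still $W$ and whose half-sum of positive roots is $\check\rho$); its Weyl denominator formula yields $\prod_{\check\al\in\check\Phi_+}2\sinh(t\langle\la+\rho,\check\al\rangle)$. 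The denominator is evaluated by the ordinary Weyl denominator formula for $\Phi$, giving $\prod_{\al\in\Phi_+}2\sinh(t\langle\al,\check\rho\rangle)$. Using $\check\rho=\sum_i\check\omega_i$ one has $\langle\al,\check\rho\rangle=\operatorname{ht}(\al)$, and symmetrically $\langle\rho,\check\al\rangle=\operatorname{ht}(\check\al)$; the multisets of heights of positive roots and positive coroots coincide because $\Phi$ and $\check\Phi$ share the same Weyl group and hence the same Coxeter exponents. Thus the two denominator products agree and the ratio equals $d_\la(q)$.

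Given this identification, $d_\la$ is a character, so $d_\la(q)d_\mu(q)=\sum_{\nu\in P_+}m_{\la\mu}^\nu d_\nu(q)$ as an identity of analytic functions in $q$. I would then check at $q=e^{\pi i/\ell}$ that $d$ is $W_\ell$-antisymmetric under the dot action: the relation $[n+\ell]=-[n]$ combined with $\sum_{\check\al\in\check\Phi_+}\langle\theta_s,\check\al\rangle=2\operatorname{ht}(\theta_s)\in 2\Z$ gives $d_{T_\ell.\la}=d_\la=\ve(T_\ell)d_\la$; the Weyl formula gives $d_{s_i.\la}=-d_\la$, so $d_{w.\la}=\ve(w)d_\la$ for every $w\in W_\ell$, and $d_\mu=0$ when $\mu+\rho$ lies on a wall of $W_\ell$. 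Substituting (\ref{qRacah}) into $\sum_{\nu\in C_\ell}N_{\la\mu}^\nu d_\nu$ and reindexing $\nu'=w.\nu$ collapses the Weyl sum to $\sum_{\nu'\in P_+}m_{\la\mu}^{\nu'}d_{\nu'}=d_\la d_\mu$, so $d$ is a dimension function on $\CC(\g,q,\ell)$.

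For positivity, any $\la\in C_\ell$ satisfies $0<\langle\la+\rho,\check\theta_s\rangle<\ell$ (since $m\nmid\ell$), and under Langlands duality $\check\theta_s$ is the highest coroot and dominates every other positive coroot; hence $0<\langle\la+\rho,\check\al\rangle<\ell$ and $0<\langle\rho,\check\al\rangle<\ell$ for every $\check\al\in\check\Phi_+$, so all $q$-numbers appearing in (\ref{fpdim}) are positive at $q=e^{\pi i/\ell}$ and $d_\la(e^{\pi i/\ell})>0$. Uniqueness of $\FPdim$ then concludes the proof. The hard part will be the character identification in the first paragraph, whose crux is the (elementary but non-obvious) multiset equality of heights of positive roots and positive coroots via the common Coxeter exponents of $\Phi$ and $\check\Phi$.
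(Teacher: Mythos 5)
Your proposal is correct and follows the same overall strategy as the paper's proof: show positivity of $d_\la(e^{\pi i/\ell})$ on $C_\ell$, identify $d_\la(q)$ as a character of $\Rep(U_q\g)$ via the Weyl character formula at an element proportional to $2\check\rho$, verify $W_\ell$-antisymmetry under the dot action (so that formula (\ref{qRacah}) descends $d$ to a character of $Gr(\CC(\g,q,\ell))$), and invoke the uniqueness of $\FPdim$ among positive dimension functions. The one place you deviate is the character identification: you evaluate numerator and denominator of $\chi_\la(2t\check\rho)$ using the Weyl denominator formulas for $\check\Phi$ and for $\Phi$ respectively, which leaves you needing the equality of the height multisets of $\Phi_+$ and $\check\Phi_+$ to match the denominator with that of $d_\la$; the paper instead applies the $\check\Phi$-side denominator formula to both the numerator and denominator of $d_\la$ at once, obtaining directly $d_\la(q)=\bigl(\sum_w\ve(w)q^{\lan w(\la+\rho),2\check\rho\ra}\bigr)\big/\bigl(\sum_w\ve(w)q^{\lan w\rho,2\check\rho\ra}\bigr)$, which is literally $\chi_\la$ at $2\check\rho\log q$, so the height-multiset step you call the ``hard part'' never arises. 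Your $T_\ell$-invariance argument via $[n+\ell]=-[n]$ together with $\lan\theta_s,2\check\rho\ra\in 2\Z$ is a harmless variant of the paper's Weyl-sum computation, and your observation that $d_\mu=0$ when $\mu+\rho$ lies on a wall is a point worth making explicit when collapsing the $W_\ell$-sum.
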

\begin{proof}
First observe that $d_\la(e^{\pi i/\ell})$ is positive since $0<\lan\la+\rho,\check{\al}\ra\leq\lan\la+\rho,\check{\theta}_s\ra=\lan\la+\rho,\theta_s\ra<\ell$ for any positive coroot
$\check{\al}$, so that evaluating any
factor $[\lan\la+\rho,\check{\al}\ra]$ of the numerator of $d_\la(q)$ at $e^{\pi i/\ell}$
gives
$$\sin(\lan\la+\rho,\check{\al}\ra\pi/\ell)>0$$ and similarly for
any factor of the denominator.  Since the Frobenius-Perron dimension
is unique among positive dimension functions, it suffices to show that $d_\la(e^{\pi i/\ell})$ is indeed a dimension function for $\CC(\g,q,\ell)$.  This is achieved first by showing that $d_\la(q)$ is a dimension function for $\Rep(U_q\g)$, and then showing that $d_\la(e^{\pi i/\ell})$ exhibits the requisite $W_\ell$-antisymmetry to derive the result for
$\CC(\g,q,\ell)\in\mathfrak{C}(\g,\ell)$.

Applying the Weyl denominator de-factorization formula to the coroot system with normalized $W$-invariant form $\lan\,\,,\,\ra^\prime=m\lan\,\,,\,\ra$ one obtains
$$d_\la(q)=
\frac{\sum_{w\in W}\ve(w)q^{\lan w(\la+\rho),2\check{\rho}\ra}}{\sum_{w\in W}\ve(w)q^{\lan w(\rho),2\check{\rho}\ra}}$$ where
$\check{\rho}$ is half the sum of the positive coroots.  But this is precisely the
character $\chi_\la$ of the $U_q\g$-module $V_\la$ evaluated at a particular element (see
\cite{Kir}[Section 3] for example).  Since $\chi_\la\chi_\mu=\sum_\nu m_{\la\mu}^\nu\chi_\nu$,
we have that $d_\la(q)$ is a dimension function for $\Rep(U_q\g)$.  To show that $$d_\la(e^{\pi i/\ell})d_\mu(e^{\pi i/\ell})=\sum_\nu N_{\la\mu}^\nu d_\nu(e^{\pi i/\ell})$$
we need only show that $d_{w.\la}(e^{\pi i/\ell})=\ve(w)d_\la(e^{\pi i/\ell})$ for any
$w\in W_\ell$.  It suffices to check that $d_\la(e^{\pi i/\ell})$ is invariant under the dot action of the generator $T_\ell$ (since $\ve(T_\ell)=1$) and anti-invariant under the dot action of the generators $s_i$ (with $\ve(s_i)=-1$). For this we compute:
$$\lan w(s_i.\la+\rho),2\check{\rho}\ra=
\lan w(s_i(\la+\rho)-\rho+\rho),2\check{\rho}\ra=\lan ws_i(\la+\rho),2\check{\rho}\ra$$
and since $\ve(ws_i)=-\ve(w)$ the anti-symmetry with respect to $s_i$ follows by reindexing
the sum.  Similarly for $T_\ell$ we compute (recalling that $T_\ell$ is the translation
by $\ell\check{\theta}_s=\ell\theta_s$ in this case):
\begin{eqnarray*}
&&\lan w(T_\ell.\la+\rho),2\check{\rho}\ra=\lan T_\ell(\la+\rho),w^{-1}(2\check{\rho})\ra=
\lan \la+\rho+\ell\theta_s,w^{-1}(2\check{\rho})\ra=\\
&&\lan w(\la+\rho),2\check{\rho}\ra+\ell\lan w(\theta_s),2\check{\rho}\ra
\end{eqnarray*}
and since $\ell\lan w(\theta_s),2\check{\rho}\ra$ is an \emph{even} multiple of $\ell$
(as $\lan w(\theta_s),\check{\rho}\ra\in\Z$), $q^{\ell\lan w(\theta_s),2\check{\rho}\ra}=1$.
Thus $d_\la(e^{\pi i/\ell})$ is invariant under the dot action of $T_\ell$.  This completes
the proof that $d_\la(e^{\pi i/\ell})=\FPdim(X_\la)$.
\end{proof}
\begin{remark}
The statement of Theorem \ref{fpdim} is false if $m\mid\ell$ and $m\neq 1$.  The proof fails because
the translation $T_\ell$ in those cases is by $\ell\check{\theta}=\frac{\ell}{m}\theta$,
so that $\ell\lan w(\check{\theta}),2\check{\rho}\ra$ \emph{is not} always an even
multiple of $\ell$.
\end{remark}

\subsection{Lie Type $G_2$}
The category $\CC(\mathfrak{g}_2,q,7)$ is the trivial rank $1$ category for any choice of $q$, and the rank of $\CC(\mathfrak{g}_2,q,\ell)$ for $3\nmid\ell$ can be computed via the
generating function $\frac{1}{(1-x)(1-x^2)(1-x^3)}$ from \cite{Survey}.  If we label the
short fundamental weight (corresponding the the $7$-dimensional rep of $\mathfrak{g}_2$) by $\La_1$ and the long fundamental weight by $\La_2$ we obtain
the following formula for $\dim_q$ of the simple object $X_\la$ labeled by weight
$\la=\la_1\La_1+\la_2\La_2$:
\begin{equation*}
\frac{[\la_1+1][3(\la_2+1)][\la_1+3\la_2+4][3(\la_1+\la_2+2)][2\la_1+3\la_2+5][3(\la_1+2\la_2+3)]}{[1][3][4][5][6][9]}.
\end{equation*}
The formula for $\FPdim(X_\la)$ is similar to the above except one must cancel
the factors of $3$ in the $q$-numbers of the form $[3n]$ in the numerator and denominator,
and then evaluate at $e^{\pi i/\ell}$.

For $\ell$ sufficiently large $\CC(\mathfrak{g}_2,q,\ell)$ is not (pseudo-)unitary for
any choice of $q$, but for smaller values of $\ell$ one obtains some unitarity results.  In particular, we have the following:
\begin{theorem}\label{g2thm}
Let $\ell$ be an integer with $\ell> 7$ and $3\nmid\ell$, and $q=e^{\pi i z/\ell}$ with $\gcd(z,\ell)=1$.  Then
\begin{enumerate}
\item[(a)] if $\ell\geq 16$, $\CC(\mathfrak{g}_2,q,\ell)$ is not pseudo-unitary for any choice of $q$,
\item[(b)] the categories $\CC(\mathfrak{g}_2,e^{\pi i 2/11},11)$, $\CC(\mathfrak{g}_2,e^{\pi i 3/13},13)$ and $\CC(\mathfrak{g}_2,e^{\pi i 3/14},14)$ are pseudo-unitary, and
\item[(c)] $\CC(\mathfrak{g}_2,q,8)$ $\CC(\mathfrak{g}_2,q,10)$ and $\CC(\mathfrak{g}_2,q,11)$ are Grothendieck unitarizable.
\end{enumerate}
\end{theorem}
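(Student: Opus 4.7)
A structural simplification will be used throughout: for $\ell$ in the range covered by part (a) the Grothendieck ring $Gr(\CC(\mathfrak{g}_2,q,\ell))$ admits no nontrivial $\Z_2$-grading, since the fusion rule $X_{\La_1}\ot X_{\La_1}\supset X_{\La_1}$ survives the truncation \eqref{qRacah} for every $\ell\geq 16$. Because $X_{\La_1}$ is self-dual and generates the category, Lemma \ref{dimlemma}(b) then forces $\dim_q(X_\la)=\FPdim(X_\la)$ for \emph{every} simple $X_\la$ in any pseudo-unitary categorification of this ring. Pseudo-unitarity thus becomes equivalent to equating the product formula \eqref{qdim} at $q=e^{\pi iz/\ell}$ with the coroot product of Theorem \ref{fpthm} at $q=e^{\pi i/\ell}$ on every simple object simultaneously.

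For part (a), I would attempt to derive a contradiction from the single equation for $\la=\La_1$. Explicitly,
\[
\dim_q(X_{\La_1})=\frac{[2][7][12]}{[1][4][6]},
\]
while $\FPdim(X_{\La_1})$ is the analogous product of sines whose integer arguments come from substituting coroots for roots in \eqref{qdim}. Setting $q=e^{\pi iz/\ell}$ and passing to sines, pseudo-unitarity would require a nontrivial trigonometric identity to hold for some $z$ with $\gcd(z,\ell)=1$. The plan is to bound $\max_z|\dim_q(X_{\La_1})|$ from above and $\FPdim(X_{\La_1})$ from below using the monotonicity of $\sin$ on $(0,\pi/2)$ and elementary product inequalities, and to show that the two bounds separate uniformly for $\ell\geq 16$. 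Should $X_{\La_1}$ alone be insufficient to rule out every admissible $z$, the analogous equation for $X_{\La_2}$ provides a second, independent constraint that overdetermines $z$.

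Part (b) is a direct computation: for each of $(\ell,z)\in\{(11,2),(13,3),(14,3)\}$ I enumerate the short list $C_\ell$, compute $\dim_q(X_\la)$ from \eqref{qdim} and $\FPdim(X_\la)$ from Theorem \ref{fpthm}, and verify $|\dim_q(X_\la)|=\FPdim(X_\la)$ for every $\la\in C_\ell$ (any signs being accounted for via the $\Z_2$-grading of Lemma \ref{dimlemma}(a), which in these cases will turn out to be trivial). For part (c), I would exhibit, for each $\ell\in\{8,10,11\}$, a Grothendieck equivalence between $\CC(\mathfrak{g}_2,q,\ell)$ and a unitary premodular category $\F$ supplied by Wenzl's theorem, that is, one of the form $\CC(\g',e^{\pi i/\ell'},\ell')$ with $m'\mid\ell'$ (or a small tensor product of such). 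For $\ell=8$ the truncated fusion ring is the Fibonacci ring $X_{\La_1}\ot X_{\La_1}=\one\oplus X_{\La_1}$, whose unique unitary categorification arises from $\mathfrak{sl}_2$ at a suitable root of unity; for $\ell=10$ and $\ell=11$ (ranks $4$ and $5$) I would search the short list of low-rank unitary categories of Lie types $A$, $B$, $C$ or $E$, guided by matching rank, global dimension, and the fusion table on $X_{\La_1}$. The explicit $F_4$--$E_8$ equivalence proved in the Appendix is a template for this type of identification.

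The main obstacle is the uniform trigonometric estimate required for part (a). Pointwise estimates at a fixed $\ell$ are routine, but producing bounds that separate for \emph{all} $\ell\geq 16$ and all $\phi(\ell)$ admissible residues $z$ simultaneously is delicate. I expect to split the analysis into the regimes ``$z$ small'' and ``$z$ close to $\ell/2$'' (where $|\dim_q(X_{\La_1})|$ approaches $\FPdim(X_{\La_1})$ most closely), invoking the absence of a nontrivial $\Z_2$-grading to eliminate the borderline residues that the sine bounds alone cannot.
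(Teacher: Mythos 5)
Your reduction for part (a) to the single inequality $\dim_q(X_{\La_1}) < \FPdim(X_{\La_1})$ is exactly the paper's first move (via Lemma~\ref{dimlemma}(b), since $X_{\La_1}$ is self-dual and a subobject of $X_{\La_1}^{\ot 2}$; the detour through the $\Z_2$-grading is correct but superfluous). However, you stop at a plan and never establish the inequality: you concede that the uniform trigonometric estimate is the ``main obstacle,'' and your proposed remedies (splitting into residue regimes, re-invoking the $\Z_2$-grading, falling back on $X_{\La_2}$) do not amount to a proof. The paper closes the gap with a factorization you do not find: with $t=z\pi/\ell$,
\[
\dim_q(X_{\La_1})=\frac{\sin 7t}{\sin t}\bigl(4\cos^2 2t-3\bigr),
\]
with $4\cos^2 2t-3\in[-3,1]$. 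One then checks $\min_{0<t<\pi/2}\frac{\sin 7t}{\sin t}=-\tfrac{7+14\sqrt{7}}{27}>-1.632$, so when $\frac{\sin 7t}{\sin t}<0$ the whole expression is $<4.9<5.02<\FPdim(X_{\La_1})$ for $\ell\geq 16$; when $\frac{\sin 7t}{\sin t}>0$, one treats $z=1$ separately (strict inequality from $4\cos^2(2\pi/\ell)-3<1$), uses monotonicity of $\frac{\sin 7t}{\sin t}$ on $(0,1/2)$, and the crude bound $\frac{\sin 7t}{\sin t}<2$ on $[1/2,\pi/2]$. Without this or an equivalent explicit estimate, part~(a) is not proved; $X_{\La_1}$ alone does suffice, but only once the factorization is available.

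Part (c) also contains a concrete error at $\ell=8$: the truncated ring is \emph{not} Fibonacci. It has rank $2$ with $X_{\La_1}\ot X_{\La_1}=\one$ (one can check directly that $\dim_q(X_{\La_1})=\frac{[2][7][12]}{[1][4][6]}=-1$ at $q=e^{\pi i/8}$), and the paper identifies $\CC(\mathfrak{g}_2,q,8)\greq\CC(\mathfrak{sl}_2,e^{\pi i/3},3)$, a pointed $\Z_2$ category. Your plan for $\ell=10,11$ is also unlikely to land as stated, since you restrict the search to full quantum-group categories $\CC(\g',e^{\pi i/\ell'},\ell')$ and their $\boxtimes$-products, whereas the actual unitary targets are the \emph{even} subcategories $\Z(\CC(\mathfrak{sl}_2,e^{\pi i/5},5))^{\boxtimes 2}$ and $\Z(\CC(\mathfrak{sl}_2,e^{\pi i/11},11))$, which do not lie in that search space. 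Part~(b) is fine as a plan; note the paper simply verifies positivity of $\dim_q$ on all simples, which by uniqueness of the positive dimension function already forces $\dim_q=\FPdim$.
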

\begin{proof}
For part (a) fix $\ell\geq 16$.  Observe that $X_{\La_1}$ appears as a subobject of $X_{\La_1}^{\ot 2}$, so that if $\CC(\mathfrak{g}_2,q,\ell)$ were pseudo-unitary we would have $\dim_q(X_{\La_1})=\FPdim(X_{\La_1})$ by Lemma \ref{dimlemma}.
Thus for part (a) it is enough to show that $\dim_q\neq\FPdim$ for any choice of $q$, which
can be accomplished by showing $\dim_q(X_{\La_1})<\FPdim(X_{\La_1})$.  Comparing formulas (\ref{fpdim}) and (\ref{qdim}), we see that this amounts
to showing
\begin{equation}\label{g2ineq}
\frac{\sin(7z\pi/\ell)\sin(2z\pi/\ell)\sin(12z\pi/\ell)}{\sin(z\pi/\ell)\sin(4z\pi/\ell)\sin(6z\pi/\ell)}< \frac{\sin(7\pi/\ell)}{\sin(\pi/\ell)}
\end{equation}
for any $1\leq z\leq\ell-1$ with $\gcd(z,\ell)=1$.  By invariance of $\dim_q(X_{\La_1})$ we
may assume further that $1\leq z\leq \frac{\ell-1}{2}$.  For notational convenience let us introduce the new variable $t=z\pi/\ell$ so that $0<t<\pi/2$.  Standard trigonometric identities
yield
$$\dim_q(X_{\La_1})=\frac{\sin(7t)\sin(2t)\sin(12t)}{\sin(t)\sin(4t)\sin(6t)}=\frac{\sin(7t)}{\sin(t)}(4\cos^2(2t)-3).$$
Since $-3\leq 4\cos^2(2t)-3\leq 1$, we consider two cases: (I) $\frac{\sin(7t)}{\sin(t)}<0$ and (II) $\frac{\sin(7t)}{\sin(t)}>0$.

In case (I) we compute the minimum of
$\frac{\sin(7t)}{\sin(t)}$ to be $-\frac{7+14\sqrt{7}}{27}>-1.632$.  So in case (I) we have
that $\dim_q(X_{\La_1})<(-3)(-1.632)<4.9$.  Next notice that $\FPdim(X_{\La_1})$ is
an increasing function of $\ell$, so we have $$5.02<\frac{\sin(7\pi/16)}{\sin(\pi/16)}\leq\FPdim(X_{\La_1})$$ hence in case (I) we see that
(\ref{g2ineq}) holds.

 For case (II) we first observe that (\ref{g2ineq}) holds for $z=1$, since $4\cos^2(2\pi/\ell)-3\neq 1$, reducing the problem to showing
$$\frac{\sin(7z\pi/\ell)}{\sin(z\pi/\ell)}<\frac{\sin(7\pi/\ell)}{\sin(\pi/\ell)}$$ on $2\leq z\leq \frac{\ell-1}{2}$.  One sees that the function $\frac{\sin(7t)}{\sin(t)}$ is decreasing
on the interval $0<t<1/2$, corresponding to $0<z<\frac{\ell}{2\pi}$, so that $\frac{\sin(7z\pi/\ell)}{\sin(z\pi/\ell)}<\frac{\sin(7\pi/\ell)}{\sin(\pi/\ell)}$ certainly holds on the interval $(1,\frac{\ell}{2\pi})$.  Moreover, on $1/2\leq t\leq \pi/2$, we have $\frac{\sin(7t)}{\sin(t)}<2<\frac{\sin(7\pi/\ell)}{\sin(\pi/\ell)}$ so that
 (\ref{g2ineq}) also holds in case (II), completing the proof of (a).

The proof of (b) is simply a computation: one uses formula \ref{qdim} for $\dim_q$ evaluated at
the given value of $q$ to check positivity on the ($5,8$ and $10$) simple objects for the
given categories corresponding to $\ell=11,13$ and $14$.

To prove part (c) we explicitly determine the fusion rules for the given categories
by computing the classical multiplicities $m_{\la\mu}^{\nu}$ (for instance by using
Stembridge's Maple packages \text{coxeter/weyl} \cite{Stem}) and applying formula (\ref{qRacah}).  Then we recognize the fusion rules as those of well-known unitary premodular categories obtained
from the quantum group $U_q\mathfrak{sl}_2$.  In particular, we have:
\begin{enumerate}
\item $\CC(\mathfrak{g}_2,q,8)\greq\CC(\mathfrak{sl}_2,e^{\pi i/3},3)$,

\item $\CC(\mathfrak{g}_2,q,10)\greq\Z(\CC(\mathfrak{sl}_2,e^{\pi i/5},5))\boxtimes \Z(\CC(\mathfrak{sl}_2,e^{\pi i/5},5))$ and
\item  $\CC(\mathfrak{g}_2,q,11)\greq\Z(\CC(\mathfrak{sl}_2,e^{\pi i/11},11))$
\end{enumerate}
where $\Z(\CC(\mathfrak{sl}_2,q,\ell))$ is the subcategory generated by the simple
objects with even labels $X_0,X_2,\cdots$, \emph{i.e.} with categorical dimensions $1=[1],[3],[5],\cdots$,
 and $\boxtimes$ is the tensor product of $\C$-linear categories.
Thus these categories are Grothendieck unitarizable.
\end{proof}

\subsection{Lie type $F_4$}

The category $\CC(\mathfrak{f}_4,q,\ell)$ with $\ell$ odd is trivial for $\ell=13$ and has rank $4,10,21,39...$ for $\ell=15,17,19,21...$ as can be obtained from the generating function $[(1-x)(1-x^2)^2(1-x^3)(1-x^4)]^{-1}$ (see \cite{Survey}).  We label
the four fundamental weights by $\La_i$, $1\leq i\leq 4$ where $\La_1$ corresponds to the $26$-dimensional representation of $\mathfrak{f}_4$.

As in the Lie type $G_2$ case, when $\ell$ is large enough, the categories $\CC(\mathfrak{f}_4,q,\ell)$ are not pseudo-unitary, while for small values of $\ell$ one
does obtain some forms of unitarity.  Specifically we have:
\begin{theorem}\label{f4thm}
Let $\ell$ be an integer with $\ell> 13$ and $2\nmid\ell$, and $q=e^{\pi i z/\ell}$ with $\gcd(z,\ell)=1$.  Then
\begin{enumerate}
\item[(a)] if $\ell\geq 19$, $\CC(\mathfrak{f}_4,q,\ell)$ is not pseudo-unitary for any choice of $q$,
\item[(b)] $\CC(\mathfrak{f}_4,e^{3\pi i/17},17)$ is pseudo-unitary and
\item[(c)] $\CC(\mathfrak{f}_4,q,15)$ and $\CC(\mathfrak{f}_4,q,17)$are Grothendieck unitarizable.
    \end{enumerate}
\end{theorem}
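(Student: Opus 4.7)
The plan is to follow the exact same three-step strategy that was used for Lie type $G_2$ in Theorem \ref{g2thm}, adapting the trigonometric computations to the $F_4$ root system.

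For part (a), the crucial observation is that the $26$-dimensional simple object $X_{\La_1}$ is self-dual and, since the classical tensor product decomposition $V_{\La_1}\otimes V_{\La_1}$ contains $V_{\La_1}$ with positive multiplicity, the same holds at the categorical level for any $\ell\geq 19$ (the single dominant weight $2\La_1$ that could be affected by the affine reflection is still inside $C_\ell$ in this range). Thus $X_{\La_1}$ is a subobject of $X_{\La_1}\otimes X_{\La_1}^*$, so by Lemma \ref{dimlemma}(b), pseudo-unitarity would force $\dim_q(X_{\La_1})=\FPdim(X_{\La_1})$. I would therefore compute $\dim_q(X_{\La_1})$ from (\ref{qdim}) and $\FPdim(X_{\La_1})$ from Theorem \ref{fpthm} using (\ref{fpdim}), writing both as products of sines under the substitution $t=z\pi/\ell\in (0,\pi/2)$ (using the $z\leftrightarrow\ell-z$ symmetry). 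For $F_4$ the coroot/root dichotomy means the two products differ only in the twelve factors coming from long roots: $\sin(n t)$ in $\dim_q$ versus $\sin(n t/2)$ (appropriately reinterpreted via the half-angle substitution) in $\FPdim$. After cancellation and applying $\sin(2x)=2\sin x\cos x$, I would reduce (\ref{g2ineq})-type inequality for $F_4$ to showing $\dim_q(X_{\La_1})$ is bounded by some small constant times a product of cosine factors, while $\FPdim(X_{\La_1})$ grows like $\sin(c\pi/\ell)^{-1}$ for the largest exponent $c$ of the $F_4$ root system. Splitting into the two sign cases of the leading $\sin$-ratio (as done for $G_2$) and noting monotonicity of $\FPdim(X_{\La_1})$ in $\ell$, with the base case $\ell=19$ checked numerically, should give the strict inequality.

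For part (b), the proof is a direct finite computation: evaluate the product formula (\ref{qdim}) at $q=e^{3\pi i/17}$ for each of the $10$ weights in $C_{17}$ and verify positivity. One can list the dominant weights in $C_{17}=\{\la\in P_+:\lan\la+\rho,\theta_s\ra<17\}$ and check each factor.

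For part (c), I would explicitly determine the fusion rules via (\ref{qRacah}), using a computer algebra package such as Stembridge's \texttt{coxeter/weyl} \cite{Stem} to evaluate the classical multiplicities $m_{\la\mu}^\nu$ and then implementing the $W_\ell$-antisymmetrization. For $\ell=17$, the appendix of this paper provides an explicit equivalence between $\CC(\mathfrak{f}_4,q,17)$ and a category of Lie type $E_8$, which is unitary by Wenzl's theorem (since $E_8$ is simply-laced, so $m=1$ divides $\ell$ automatically). For $\ell=15$, where $\CC(\mathfrak{f}_4,q,15)$ has rank $4$, I would match the fusion rules against well-known small unitary premodular categories, most plausibly one coming from $U_q\mathfrak{sl}_2$ or a product thereof, exhibiting the required Grothendieck equivalence.

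The main obstacle is part (a): the $F_4$ positive root system has $24$ elements (twelve short, twelve long), so the ratio $\dim_q(X_{\La_1})/\FPdim(X_{\La_1})$ is a considerably more intricate trigonometric expression than the one appearing in (\ref{g2ineq}), and bounding it uniformly over all admissible $z$ requires a careful case analysis that exploits the interleaving of long and short roots rather than just crude sine-ratio estimates.
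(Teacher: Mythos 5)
Your proposal follows exactly the paper's strategy for all three parts: the Lemma \ref{dimlemma}(b) subobject argument combined with a trigonometric inequality for (a) — where the paper's key identity is $\frac{\sin(18t)\sin(3t)}{\sin(6t)\sin(9t)}=4\cos^2(3t)-3$, analogous to the $G_2$ case — a direct positivity computation for (b), and the $\Z(\CC(\mathfrak{sl}_2,e^{\pi i/5},5))^{\boxtimes 2}$ and $\CC(\mathfrak{e}_8,q,34)$ Grothendieck equivalences (the latter via the Appendix) for (c). One minor slip: $\FPdim(X_{\La_1})$ does not grow like $\sin(c\pi/\ell)^{-1}$ but rather increases monotonically to the classical dimension $26$; this does not affect your argument, which, like the paper's (which also omits the detailed case analysis for inequality (\ref{f4ineq})), only needs the monotonicity together with a numerical lower bound at $\ell=19$.
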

\begin{proof}
The proof of (a) follows the same strategy as in in the Lie type $G_2$ case.  Once again $X_{\La_1}$ appears as a subobject of its own second tensor power, so that pseudo-unitarity would imply $\dim_q(X_{\La_1})=\FPdim(X_{\La_1})$. One reduces to showing
that
\begin{equation}\label{f4ineq}
\frac{\sin(3z\pi/\ell)\sin(8z\pi/\ell)\sin(13z\pi/\ell)\sin(18z\pi/\ell)}{\sin(z\pi/\ell)\sin(4z\pi/\ell)\sin(6z\pi/\ell)\sin(9z\pi/\ell)}<\frac{\sin(8\pi/\ell)\sin(13\pi/\ell)}{\sin(\pi/\ell)\sin(4\pi/\ell)}
\end{equation} for $\ell\geq 19$ and $1\leq z\leq \frac{\ell-1}{2}$.  We omit the details, pausing to note that the relevant trigonometric identity in terms of $t=z\pi/\ell$ is
$$\frac{\sin(18t)\sin(3t)}{\sin(6t)\sin(9t)}=4\cos^2(3t)-3.$$

For part (b) one simply computes $\dim_q$ $10$ simple objects corresponding
to $\ell=17$ and verifies that they are positive.

For part (c) we have the following explicit Grothedieck equivalences with unitary premodular categories:
\begin{enumerate}
\item $\CC(\mathfrak{f}_4,q,15)\greq\Z(\CC(\mathfrak{sl}_2,e^{\pi i/5},5))\boxtimes \Z(\CC(\mathfrak{sl}_2,e^{\pi i/5},5))$
\item $\CC(\mathfrak{f}_4,q,17)\greq\CC(\mathfrak{e}_8,e^{\pi i/34},34)$.
\end{enumerate}
The case $\ell=15$ is fairly simple to verify, while the case $\ell=17$ is handled in the Appendix.
\end{proof}

\subsection{Lie Types $B$ and $C$}

For Lie types $B$ and $C$ we have a somewhat stronger statement.  For these types, we can
combine the classification theorem in \cite{TbWz} and the results in \cite{Rowell1} to show that any braided fusion category that is
Grothendieck equivalent to $\CC(\mathfrak{so}_{2k+1},q,\ell)$ for some $q$ with $\ell$ odd has
the same dimension function for some (possibly different) choice of $q$.  Note that these categories are trivial for $\ell=2k+1$.
We have the following theorem, part (a) of which is a sharpening of \cite{Rowell1}[Theorem 7.1]:
\begin{theorem}\label{bcthm}
Let $\ell$ be an odd integer with $2k+1<\ell$, and $q=e^{\pi i z/\ell}$ with $\gcd(z,\ell)=1$.  Then
\begin{enumerate}
\item[(a)] if $2k+5\leq\ell$, neither $\CC(\mathfrak{so}_{2k+1},q,\ell)$ nor $\CC(\mathfrak{sp}_{\ell-2k-1},q,\ell)$ is pseudo-unitary or Grothendieck unitarizable for any choice of $q$, and
\item[(b)] if $\ell=2k+3$ then both $\CC(\mathfrak{so}_{2k+1},q,\ell)$ and $\CC(\mathfrak{sp}_{\ell-2k-1},q,\ell)$ are Grothendieck unitarizable.
    \end{enumerate}
\end{theorem}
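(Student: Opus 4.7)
The strategy for (a) is to combine the classification of \cite{TbWz} with Theorem \ref{fpthm}. The results of \cite{TbWz}, together with \cite{Rowell1}, show that any braided fusion category Grothendieck equivalent to $\CC(\mathfrak{so}_{2k+1},q,\ell)$ (respectively to $\CC(\mathfrak{sp}_{\ell-2k-1},q,\ell)$) has its categorical dimensions given by $\dim_{q'}$ for some $q' = e^{\pi i z/\ell}$ with $\gcd(z,\ell)=1$. Since every unitary premodular category is pseudo-unitary, (a) reduces to showing that no such $\dim_{q'}$ coincides, up to signs, with the common Frobenius--Perron dimension function $\FPdim$ determined by Theorem \ref{fpthm}.

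By Lemma \ref{dimlemma}(a) it is enough to find one simple object on which $|\dim_{q'}|$ differs from $\FPdim$ for every admissible $z$. The natural candidate is $X_{\La_1}$, the quantum deformation of the vector representation. Expressed trigonometrically, $\dim_{q'}(X_{\La_1})$ is the product of sines over positive \emph{roots} of $B_k$ arising from (\ref{qdim}), evaluated at $q' = e^{\pi i z/\ell}$, while by Theorem \ref{fpthm} (applicable because $m = 2 \nmid \ell$) $\FPdim(X_{\La_1})$ is the analogous product over positive \emph{coroots} of $B_k$, evaluated at $q = e^{\pi i/\ell}$. Since the coroot system of $B_k$ is of type $C_k$, the two products differ in how their long and short factors are arranged.

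The main obstacle is establishing $|\dim_{q'}(X_{\La_1})| < \FPdim(X_{\La_1})$ uniformly over $1 \leq z \leq (\ell-1)/2$ (using the symmetry $z \leftrightarrow \ell - z$) for every $\ell \geq 2k+5$. The referee's simplification, which I would follow in place of the Lie-type-$B$-specific \emph{ad-hoc} work of \cite{Rowell1}, isolates a single distinguished short-root factor on the $\FPdim$ side whose contribution grows with $k$ for $\ell$ just above $2k+5$, and bounds the remaining factors uniformly in $z$ by elementary sine estimates. The resulting inequality is closed by a monotonicity argument on one subrange of $z$ together with a crude envelope on the complementary subrange, in the spirit of the case split in the proof of Theorem \ref{g2thm}. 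The value $\ell = 2k+5$ emerges as the precise threshold at which the dominant short-root factor on the right overtakes the universal bound on the left, explaining the improvement over the $4k+3 \leq \ell$ bound of \cite{Rowell1}.

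For (b), when $\ell = 2k+3$ the low-rank coincidence $\mathfrak{sp}_{\ell - 2k - 1} = \mathfrak{sp}_2 \cong \mathfrak{sl}_2$ exhibits $\CC(\mathfrak{sp}_{\ell-2k-1},q,2k+3)$ as a quantum $\mathfrak{sl}_2$ category, and quantum $\mathfrak{sl}_2$ categories at this level are classically known to admit unitary structures (for instance at $q = e^{\pi i/(2k+3)}$). The orthogonal side of (b) follows from the Grothendieck equivalence $\CC(\mathfrak{so}_{2k+1},q,2k+3) \greq \CC(\mathfrak{sp}_2,q,2k+3)$, a consequence of the odd-level rank-level duality that can be verified directly by enumerating the weights in the fundamental alcoves on both sides and comparing the fusion rules via (\ref{qRacah}). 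This last step is essentially bookkeeping, with no analytic content.
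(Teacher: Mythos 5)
Your reduction scheme for (a) is correct in outline — invoke the classification of \cite{TbWz}/\cite{Rowell1} to pin down all possible dimension functions, use Theorem~\ref{fpthm} for $\FPdim$, and test them on the vector representation $X_{\La_1}$ — and your treatment of (b) via $\mathfrak{sp}_2 \cong \mathfrak{sl}_2$ together with rank-level duality matches the paper. But the heart of (a) is the inequality
$$\frac{\sin((2k+1)z\pi/\ell)\,\sin(2(2k-1)z\pi/\ell)}{\sin(2z\pi/\ell)\,\sin((2k-1)z\pi/\ell)}<\frac{\sin((2k+1)\pi/\ell)}{\sin(\pi/\ell)}$$
for all $1\le z\le \ell-1$ coprime to $\ell$, whenever $2k+5\le\ell$, and here your proposal has a genuine gap. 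You describe an analytic strategy (isolate a dominant sine factor, bound the rest uniformly, split into a monotone subrange and a ``crude envelope''). That is essentially the style of argument in \cite{Rowell1}[Lemma 7.2], which is exactly what only reached $4k+3\le\ell$; nothing in your description explains how one would sharpen that to the $2k+5$ threshold, and the claim that $\ell=2k+5$ ``emerges as the precise threshold at which the dominant short-root factor overtakes the universal bound'' is asserted without any estimate that would produce it. The paper's proof of the sharp bound is of a different nature: it rewrites the left side as $\frac{\sin(4kz\pi/\ell)}{\sin(2z\pi/\ell)}+1$, applies the identity $\frac{\sin(s\al)}{\sin\al}+1 = \frac{(-1)^s+1}{2}+2\sum_{j=0}^{\lfloor (s-1)/2 \rfloor}\cos((s-1-2j)\al)$ to reduce the inequality to $\sum_{j=1}^k\cos(2(2j-1)z\pi/\ell)<\sum_{j=1}^k\cos(2j\pi/\ell)$, observes the right side is the sum of the $k$ largest values of $\cos(2s\pi/\ell)$, and then rules out equality by a combinatorial argument on residues modulo $\ell$: the sets $S_1=\{\pm1,\dots,\pm k\}$ and $S_2=\{\pm z,\pm 3z,\dots,\pm(2k-1)z\}$ cannot coincide modulo $\ell$, using the fact that the ``gap'' $[k+1,\ell-k-1]$ has size at least $4$ when $\ell\ge 2k+5$. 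This number-theoretic step is what actually yields the sharp bound, and your proposal does not supply any substitute for it.

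A secondary issue: you invoke Lemma~\ref{dimlemma}(a), concluding you must show $|\dim_{q'}(X_{\La_1})|\neq\FPdim(X_{\La_1})$. The inequality above is one-sided ($\dim_{q'}<\FPdim$), so by itself it does not rule out $\dim_{q'}(X_{\La_1})=-\FPdim(X_{\La_1})$. The paper instead uses part (b) of the lemma, noting $X_{\La_1}$ is a subobject of $X_{\La_k}\ot X_{\La_k}^*$, which forces $\dim(X_{\La_1})$ to be the \emph{positive} value $\FPdim(X_{\La_1})$ under pseudo-unitarity; the one-sided inequality then immediately gives a contradiction. You should switch to part (b) to make the logic close cleanly, or else prove a two-sided bound.
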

\begin{proof}
First one reduces to considering only $\CC(\mathfrak{so}_{2k+1},q,\ell)$ using a rank-level duality result: $\CC(\mathfrak{so}_{2k+1},q,\ell)\greq \CC(\mathfrak{sp}_{\ell-2k-1},q,\ell)$
established in \cite{Rowell1}[Corollary 6.6].

The proof of (a) relies upon establishing that $\dim_q(X_{\Lambda_1})\neq\FPdim(X_{\Lambda_1})$ where $\Lambda_1$ is the fundamental weight corresponding to the vector representation of $\mathfrak{so}_{2k+1}$.  Since $X_{\Lambda_1}$ is a subobject of $X_{\Lambda_k}\ot X_{\Lambda_k}^*$ (where $X_{\Lambda_k}$ is the object corresponding to the fundamental spin representation of $\mathfrak{so}_{2k+1}$) this is a sufficient condition by Lemma \ref{dimlemma}.  This amounts to showing that:
\begin{equation}\label{bceq}
\frac{\sin((2k+1)z\pi/\ell)\sin(2(2k-1)z\pi/\ell)}{\sin(2z\pi/\ell)\sin((2k-1)z\pi/\ell)}<\frac{\sin((2k+1)\pi/\ell)}{\sin(\pi/\ell)}
\end{equation} for any $1\leq z\leq \ell-1$ with $\gcd(z,\ell)=1$, which was done in \cite{Rowell1}[Lemma 7.2] for $4k+3\leq\ell$.  A referee graciously provided us with the following proof
of Inequality (\ref{bceq}) in the general case:

First notice that the left-hand-side of Ineq. (\ref{bceq}) can be written as 
$\frac{\sin(4kz\pi i/\ell)}{\sin(2z\pi/\ell)}+1$.  Using the identity:

$$\frac{\sin(s\al)}{\sin(\al)}+1=\frac{((-1)^s+1)}{2}+2\sum_{j=0}^{\lfloor\frac{s-1}{2}\rfloor}\cos((s-1-2j)\al)$$ 
(where $\lfloor x\rfloor$ is the greatest
integer function) one sees that Ineq. (\ref{bceq}) is equivalent to:
\begin{equation}\label{bcineq}
\sum_{j=1}^{k}\cos(2(2j-1)z\pi/\ell)<\sum_{j=1}^k\cos(2j\pi/\ell).
\end{equation}

Observe that each side of (\ref{bcineq}) is a sum of $k$ distinct numbers of the form 
$\cos(2s\pi/\ell)$ for some integer $s$ with $1\leq s\leq (2k-1)z$.  Since $\gcd(z,\ell)=1$ and
$2k-1<\ell$ we see that $\ell\nmid s$, hence the largest of these numbers is $\cos(2\pi/\ell)$
followed by $\cos(4\pi/\ell)$ etc. so that the right-hand-side of (\ref{bcineq}) is the
sum of the $k$ largest numbers of this form.  It follows that (\ref{bcineq}) holds provided 
it is not an equality.  For this, first notice that (\ref{bcineq}) 
is an equality if and only if the sets of integers 
$$S_1=\{\pm 1,\pm 2,\ldots,\pm k\}\quad \text{and} \quad S_2=\{\pm z,\pm 3z,\ldots,\pm (2k-1)z\}$$
are equal modulo $\ell$ (since $\cos(x)$ is an even function).

Suppose $S_1=S_2$.
If we imagine the set of integers modulo $\ell$ placed on a circle in the usual way, the subset $S_2$ has the following description: starting at $-(2k-1)z$, the remaining $2k-1$ elements of $S_2$ are obtained by moving counterclockwise in $2k-1$ steps of size $2z$.  The subset $S_1$ consists of all of the inequivalent residues modulo $\ell$ except $0$ and a gap $\I=[k+1,\ell-(k+1)]$ of size $|\I|=\ell-2(k+1)+1\geq 4$.  Consider two cases 1) $2z<|\I|$ and 2) $2z\geq |\I|$.  In case 1) we arrive at an immediate contradiction since after some step of size $2z$ we will arrive at some element of $S_1$ in the gap
$\I=[k+1,\ell-(k+1)]$ (for example, one of $k+2z$ or $(k-1)+2z$ is such an element).  In case 2), consider the three consecutive residues $\ell-k-2z-1,\ell-k-2z-2$ and $\ell-k-2z-3$.  By hypothesis these three numbers are not in the gap, so they must be either in $S_1$ or equal to $0$ and hence at least one of them must be of the form $(2i-1)z$ with $-k+1\leq i<k$ (another possible value is $(2k-1)z$).  Such a number shifted by $2z$ must remain in $S_1$, but each of $\ell-k-1$, $\ell-k-2$ and $\ell-k-3$ is in the gap (as $|\I|\geq 4$), a contradiction.  This proves (a).

For part (b), observe that if $\ell=2k+3$ then $\ell-2k-1=2$, so that the rank-level duality
and the isomorphism $\mathfrak{sp}_{2}\cong\mathfrak{sl}_2$
implies that
$$\CC(\mathfrak{so}_{2k+1},q,\ell)\greq \CC(\mathfrak{sp}_{2},q,\ell)\greq\CC(\mathfrak{sl}_2,q,\ell).$$  It is of course well-known
that $\CC(\mathfrak{sl}_2,q,\ell)$ is a unitary modular category for $q=e^{\pi i/\ell}$ (see e.g. \cite{Wenzl1}).  Thus (b) is established.
\end{proof}

\subsection{Concluding remarks}

\begin{enumerate}
\item A natural question to ask is if Theorems \ref{g2thm}(a) and \ref{f4thm}(a) can be
strengthened, replacing \emph{pseudo-unitary} with \emph{Grothendieck unitarizable} as in Theorem \ref{bcthm}.  Unfortunately a classification for
fusion categories of Lie types $E_N$, $F_4$ or $G_2$ (in the spirit of the type $A-D$ classifications, see \cite{KW} and \cite{TbWz}) does not exist, making such a strengthening
problematic (at least from our approach).
\item  It is desirable to have more conceptual explanation of Theorems \ref{g2thm}, \ref{f4thm} and \ref{bcthm}.  This could potentially be achieved from the observation that for $\CC(\g,q,\ell)$ with $m\mid\ell$ the corresponding categories are monoidally equivalent to the fusion categories of fixed level modules of the (untwisted) affine Kac-Moody algebras $\widehat{\g}$ (see \cite{fink}), while the sets $C_\ell$ described above are related to the labeling sets of fixed level modules of twisted affine Kac-Moody algebras (see \cite{Kac}) \emph{for which no fusion product is known}.  Moreover, A.\ Wasserman pointed out that the fusion category of level $k$
modules for affine Kac-Moody algebras $\widehat{\g}$ are naturally unitary.
\end{enumerate}

\section*{Appendix}

We claim that the categories $\CC(\mathfrak{f}_4,q,17)$ and $\CC(\mathfrak{e}_8,q,34)$ are
Grothendieck equivalent.  We will exhibit an isomorphism of unital based rings between
the corresponding Grothendieck semirings.  Let us label the four
fundamental weights of $U_q\mathfrak{f}_4$ by $\La_i$, $1\leq i\leq 4$ with the label $\La_1$ corresponding to the 26 dimensional representation of $\mathfrak{f}_4$ and the eight
fundamental weights of $U_q\mathfrak{e}_8$ by $\la_i$, $1\leq i\leq 8$ with $\la_8$ corresponding to the 248 dimensional representation of $\mathfrak{e}_8$.  The ten simple objects of $\CC(\mathfrak{f}_4,q,17)$ are labelled by $$\mathcal{B}=\{0,\La_1,\La_2,\La_3,\La_4,2\La_1,2\La_2,\La_1+\La_4,\La_1+\La_2,\La_2+\La_4\},$$
while those of $\CC(\mathfrak{e}_8,q,34)$ are labelled by
$$\mathcal{D}=\{0,\la_8,\la_7,\la_6,2\la_8,\la_1,2\la_1,\la_1+\la_8,\la_2,\la_3\}.$$
Let us use $\mathcal{B}$ and $\mathcal{D}$ to form ordered bases $\{X_i\}_{i=0}^9$ and
$\{Y_i\}_{i=0}^9$ for the Grothendieck semirings $Gr(\CC(\mathfrak{f}_4,q,17))$ and $Gr(\CC(\mathfrak{e}_8,q,34))$ (so that $X_1=X_{\La_1}$ and $Y_1=X_{\la_8}$, etc.).
Using standard techniques (e.g. \cite{Stem}) together with formula (\ref{qRacah}), one computes the fusion matrices $N_{X_1}$ and $N_{Y_1}$ relative to the given ordered bases, and finds that they are identical.
For the reader's convenience we record:
$$N_{X_1}=N_{Y_1}=
\begin{pmatrix} 0&1&0&0&0&0&0&0&0&0
\\1&1&1&0&1&1&0&0&0&0\\0&1&1&1&1&1
&0&1&1&0\\0&0&1&0&0&0&0&1&1&1\\0&1
&1&0&0&0&0&1&0&0\\0&1&1&0&0&1&0&1&1&0
\\0&0&0&0&0&0&0&1&0&1\\0&0&1&1&1&1
&1&1&1&1\\0&0&1&1&0&1&0&1&1&1\\0&0
&0&1&0&0&1&1&1&0\end{pmatrix}
$$

By first writing the fusion rules
for tensoring with $X_1$ as (commutative) polynomials in $X_0,\ldots,X_9$, a Gr\"obner basis computation (with monomials ordered by total degree) explicitly determines the remaining fusion rules.  This proves that the two categories are Grothendieck equivalent.

\end{document}